\documentclass[11pt,a4paper]{article}

\usepackage{amssymb,amsmath,amsthm}
\usepackage{authblk}

\usepackage{mathrsfs}
\usepackage{graphicx}
\usepackage[colorlinks,citecolor=blue,linkcolor=blue,urlcolor=red,pagebackref]{hyperref}
\usepackage{subcaption}
\usepackage{booktabs}
\usepackage{lineno}
\usepackage{color}
\usepackage{enumitem}
\usepackage{tikz}
\usepackage[linesnumbered,lined,boxed]{algorithm2e}
\usepackage{framed}
\usepackage{thm-restate}

\newcommand*\patchAmsMathEnvironmentForLineno[1]{%
  \expandafter\let\csname old#1\expandafter\endcsname\csname #1\endcsname
  \expandafter\let\csname oldend#1\expandafter\endcsname\csname end#1\endcsname
  \renewenvironment{#1}%
     {\linenomath\csname old#1\endcsname}%
     {\csname oldend#1\endcsname\endlinenomath}}%
\newcommand*\patchBothAmsMathEnvironmentsForLineno[1]{%
  \patchAmsMathEnvironmentForLineno{#1}%
  \patchAmsMathEnvironmentForLineno{#1*}}%
\AtBeginDocument{%
\patchBothAmsMathEnvironmentsForLineno{equation}%
\patchBothAmsMathEnvironmentsForLineno{align}%
\patchBothAmsMathEnvironmentsForLineno{flalign}%
\patchBothAmsMathEnvironmentsForLineno{alignat}%
\patchBothAmsMathEnvironmentsForLineno{gather}%
\patchBothAmsMathEnvironmentsForLineno{multline}%
}

\newtheorem{theorem}{Theorem}
\newtheorem{definition}[theorem]{Definition}
\newtheorem{corollary}[theorem]{Corollary}

\newtheorem{lemma}[theorem]{Lemma}

\newbox\ProofSym
\setbox\ProofSym=\hbox{%
\unitlength=0.18ex%
\begin{picture}(10,10)
\put(0,0){\framebox(9,9){}}
\put(0,3){\framebox(6,6){}}
\end{picture}}

\begin{document}

\title{A Sharp Upper Bound for the Frobenius Number}
\author{Lin Chen}
\author{Tingwei Hu}
\author{Yuchen Mao}
\author{Guochuan Zhang}
\affil{College of Computer Science and Technology\\
Zhejiang University\\
\{chenlin198662, tingweihu, maoyc, zgc\}@zju.edu.cn}
\date{}
\maketitle

\begin{abstract}
Let $0<a_1<\cdots<a_n$ be integers with $n\ge 2$ and
$\gcd(a_1,\ldots,a_n)=1$. Their Frobenius number
$F(a_1,\ldots,a_n)$ is the largest integer that cannot be expressed as a
nonnegative integer combination of $a_1,\ldots,a_n$.
Erd\H{o}s and Graham~\cite{EG72} proved the classical bound
$F(a_1,\ldots,a_n)\le
2a_{n-1}\lfloor a_n/n\rfloor-a_n$, which depends on the two largest
generators. Dixmier~\cite{D90} subsequently proved their conjecture
$F(a_1,\ldots,a_n)\le a_n^2/(n-1)$, a bound depending only on $a_n$ and
$n$ that removes the factor $2$ from the corresponding quadratic estimate.
Following the Erd\H{o}s--Graham approach, we remove the factor $2$
while retaining the dependence on $a_{n-1}$, and prove
\[
    F(a_1,\ldots,a_n)
    \le a_{n-1}\left\lceil\frac{a_n}{n-1}\right\rceil-a_n-1.
\]
This bound can be substantially smaller than Dixmier's when
$a_{n-1}$ is small relative to $a_n$. For every $n\ge3$, we also
construct infinitely many instances satisfying the reverse
inequality, which shows that the bound is sharp.
\end{abstract}

\section{Introduction}\label{sec:introduction}
Let $S=\{a_1,\ldots,a_n\}$, where
$0<a_1<\cdots<a_n$, $n\ge2$, and $\gcd(a_1,\ldots,a_n)=1$.
The numerical semigroup generated by $S$ is
$\langle S\rangle
=\{\sum_{i=1}^n x_i a_i\mid x_i\in\mathbb Z_{\ge0}\}$.
Its complement in $\mathbb Z_{\ge0}$ is finite, and the Frobenius
number $F(S)$ is its largest element (with $F(S)=-1$
if $a_1=1$). The problem is to determine or bound $F(S)$
from the generators; see~\cite{RA05}. 

For two generators, Sylvester's
formula gives the exact answer
\begin{equation}\label{eq:sylvester}
    F(a_1,a_2)=a_1a_2-a_1-a_2.
\end{equation}
For three or more generators, Erd\H{o}s and Graham~\cite{EG72} proved the bound
\begin{equation}\label{eq:erdos-graham}
    F(a_1,\ldots,a_n)
    \le 2a_{n-1}\left\lfloor\frac{a_n}{n}\right\rfloor-a_n.
\end{equation}
Fixing the number $n$ of generators and an upper bound $t$ on the
largest generator, Erd\H{o}s and Graham also considered the worst-case
Frobenius number
\[
    g(n,t)=\max\{F(B)\mid
       B\subseteq\{1,\ldots,t\},\
       |B|=n,\ \gcd(B)=1\}.
\]
They obtained the explicit estimates
\[
    \frac{t^2}{n-1}-5t\le g(n,t)<\frac{2t^2}{n}.
\]
The gap led to the conjecture that, for fixed $n$ as $t\to\infty$,
$g(n,t)\sim t^2/(n-1)$; this is recorded as Erd\H{o}s
Problem~\#433~\cite{EG80,EP433}. Dixmier~\cite{D90} resolved the
conjecture. His theorem and the arithmetic-progression example imply
the following concise bounds.\footnote{For simplicity, we use a slightly
weaker form of Dixmier's upper bound. For the full bound,
see~\cite{D90,HP05}.}
\[
    \left\lfloor\frac{t-2}{n-1}\right\rfloor(t-n+1)-1
    \le g(n,t)
    \le \left\lfloor\frac{t-1}{n-1}\right\rfloor(t-2)-1.
\]
The full bound is attained when $t\equiv0,1,2\pmod{n-1}$;
see~\cite{H98,HP05} for the extremal constructions and a restatement
of that theorem. Dixmier also proved the simpler universal bound
$F(S)\le a_n^2/(n-1)$~\cite{D90}.

The extremal problem above retains only $n$ and the upper bound $t$
on the largest generator. Yet both the two-generator formula
\eqref{eq:sylvester} and the Erd\H{o}s--Graham inequality
\eqref{eq:erdos-graham} involve the two largest generators. This
suggests a different question.
\begin{quote}\itshape
Can one obtain a sharper universal bound by keeping both $a_n$ and
$a_{n-1}$ as parameters?
\end{quote}

\subsection{Our contributions}
Our first result answers this question with the following bound.
\begin{theorem}[Upper bound]\label{thm:upper}
For every coprime tuple $0<a_1<\cdots<a_n$ with $n\ge2$,
\begin{equation}\label{eq:main-bound}
    F(a_1,\ldots,a_n)
    \le a_{n-1}\left\lceil\frac{a_n}{n-1}\right\rceil-a_n-1.
\end{equation}
\end{theorem}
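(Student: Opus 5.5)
The plan is to follow the Erd\H{o}s--Graham residue-class argument modulo $a_n$ and replace their factor-$2$ growth lemma with a sharp additive-combinatorics estimate. Write $m=a_n$ and $k=\lceil m/(n-1)\rceil$. For each residue class $r \bmod m$, let $t_r$ be the least element of $\langle a_1,\ldots,a_{n-1}\rangle$ congruent to $r$. Adding copies of $a_n$ shows
\[
F(a_1,\ldots,a_n)=\max_r t_r - m .
\]
So it suffices to show $t_r\le k\,a_{n-1}-1$ for every $r$. The case $n=2$ is Sylvester's formula \eqref{eq:sylvester}, since then $k=a_2$. Assume $n\ge3$.

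\textbf{Step 1 (covering by few summands).} Let $B=\{0,a_1,\ldots,a_{n-1}\}\subseteq\mathbb Z_m$. This set has $n$ distinct residues, and it generates $\mathbb Z_m$ because $\gcd(a_1,\ldots,a_n)=1$. Since $0\in B$, the iterated sumsets satisfy $hB\subseteq(h+1)B$, and $hB$ is exactly the set of residues of sums of at most $h$ generators from $a_1,\ldots,a_{n-1}$. I want
\[
|hB|\ge\min\bigl(m,\;h(n-1)+1\bigr)\qquad\text{for all } h\ge 1.
\]
This gives $kB=\mathbb Z_m$, because $k(n-1)+1>m$. Hence every residue is a sum of at most $k$ generators, each at most $a_{n-1}$, and so $t_r\le k a_{n-1}$.

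\textbf{Step 2 (shaving off the final $1$).} Equality $t_r=k a_{n-1}$ can occur only if $r$ is represented by $k$ copies of $a_{n-1}$ and by nothing smaller. I would rule this out by counting more carefully. If $k(n-1)\ge m$, then already $(k-1)B$ has size $\ge m-(n-1)+1$, so at most $n-2$ residues are new at level $k$. I would then show that the residue of $k a_{n-1}$ is reached at level $\le k$ by a sum using a smaller generator, for instance by trading $a_{n-1}$ for $a_{n-2}$ plus an element already in $(k-1)B$. Alternatively, I would apply the growth estimate to $B'=B\setminus\{a_{n-1}\}$ combined with a single extra use of $a_{n-1}$. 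If the slack $k(n-1)-m$ is positive, one more residue is free and the bound follows directly.

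\textbf{Main obstacle.} The hardest part is the growth inequality in Step 1. Over $\mathbb Z_m$ with $m$ composite, the Cauchy--Davenport bound fails in general, and Kneser's theorem only gives $|(h+1)B|\ge|hB|+|B+H|-|H|$, where $H$ is the stabilizer of $(h+1)B$. My first attempt is induction on $m$. If the stabilizer $H$ of $(h+1)B$ is trivial, Kneser yields growth $\ge n-1$. If $H$ is nontrivial and proper, I pass to the quotient $\mathbb Z_m/H$, in which $B$ still generates. I apply the induction hypothesis there, and then use that every nonempty fiber of $(h+1)B$ over the quotient is a full $H$-coset, so full cosets compensate for the residues of $B$ that collapse in the quotient. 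If this bookkeeping fails, I would fall back on a direct integer argument, in the style of Erd\H{o}s--Graham, that exploits the ordering $a_1<\cdots<a_{n-1}$ rather than pure group structure.
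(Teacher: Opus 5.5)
Your Step~1 inequality $|hB|\ge\min(m,h(n-1)+1)$ is false for composite $m$, and so is its consequence $kB=\mathbb{Z}_m$. The induction on $m$ you sketch therefore cannot succeed. Take $(a_1,a_2,a_3,a_4)=(1,6,7,12)$, so $m=12$ and $k=4$. Then $B=\{0,1,6,7\}=\{0,1\}+\{0,6\}$, and so $hB=\{0,1,\ldots,h\}+\{0,6\}$. Already $|2B|=6<7$, and $4B$ misses the residues $5$ and $11$. Residue $5$ really needs five summands: any $i\cdot1+j\cdot6+l\cdot7\equiv5\pmod{12}$ forces $i+l\equiv5\pmod 6$. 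So the claim that every residue is a sum of at most $k$ generators is simply wrong. The bound $t_r\le ka_{n-1}$ holds here only because the extra summands are cheap.

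This periodic case, $kB$ proper with nontrivial stabilizer $H$ (here $H=\{0,6\}$), is the heart of the paper's Lemma~\ref{lem:weak-upper}. It requires bounding the \emph{cost} of a representation rather than the number of summands. Let $\bar B$ be the image of $B$ in $\mathbb{Z}_m/H$ and $c=|\mathbb{Z}_m/H|-k(|\bar B|-1)\ge2$. Then $(k+c-1)\bar B=\mathbb{Z}_m/H$. Meanwhile, $k(n-1)\ge m$ forces the cosets met by $B$ to be nearly full: $(\Delta+1)/|H|\le1-c/k$ with $\Delta=|B+H|-|B|$. Hence the least generator in each nonzero such coset is at most $(1-c/k)a_{n-1}$. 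Moreover, the sums $x+y$ with $x\in B\cap(b+H)$ and $y\in B\cap H$ fill the whole coset $b+H$, which lets one lift the quotient representation using two summands of size at most $a_{n-1}$. The total cost is then at most $\bigl(2+(1-c/k)(k+c-2)\bigr)a_{n-1}\le ka_{n-1}$.

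Your Step~2 correctly reduces the final $-1$ to the single residue of $ka_{n-1}$. Equivalently, one must show that $ka_{n-1}-a_n$ is representable, which is exactly what the paper proves. However, each mechanism you offer (trading against $(k-1)B$, growth of $(k-1)B+B'$, or positive slack) relies on the same false growth estimate. Counting alone also never says which residue receives the extra room.

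The missing idea is to reflect: put $D=a_{n-1}-\{0,a_1,\ldots,a_{n-1}\}\subseteq[0,a_{n-1}]$. Suppose no nonzero sum of at most $j$ elements of $D$ were divisible by $a_n$. Then $0$ would have only the trivial representation in $(j-1)D+D$. The unique-representation form of Kemperman--Scherk (Corollary~\ref{lem:Kem}) then gives growth by $n-1$ at each step, with no periodicity obstruction. This yields $|kD|\ge k(n-1)+1>a_n$, a contradiction. So there is a nonzero sum $d_1+\cdots+d_r=sa_n$ with $r\le k$, and it gives $ka_{n-1}-a_n=(k-r)a_{n-1}+\sum_i(a_{n-1}-d_i)+(s-1)a_n$.
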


\begin{theorem}[Lower bound]\label{thm:lower}
For each $n\ge3$, there are infinitely many instances
$0<a_1<\cdots<a_n$ such that
\[
    F(a_1,\ldots,a_n)
    \ge a_{n-1}\left\lceil\frac{a_n}{n-1}\right\rceil-a_n-1.
\]
\end{theorem}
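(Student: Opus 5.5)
Since the bound in Theorem~\ref{thm:upper} is an inequality, I would prove Theorem~\ref{thm:lower} by exhibiting, for each $n\ge3$, an explicit one-parameter family $S_k=\{a_1<\cdots<a_n\}$ and computing $F(S_k)$ exactly. The plan has three parts: choose the family so that $a_n$ is redundant (already in the semigroup generated by $a_1,\ldots,a_{n-1}$), so that $F(S_k)=F(a_1,\ldots,a_{n-1})$; compute that number exactly; and compare it with the right-hand side of \eqref{eq:main-bound}. To keep the ceiling under control, I would take $a_n=(n-1)c$, so that $\lceil a_n/(n-1)\rceil=c$ and the target reads $F(S_k)\ge (a_{n-1}-(n-1))c-1$.

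For $n=3$ the natural choice is $S_k=\{k,\,k+1,\,2k\}$ with $k\ge2$. Here $2k=k+k$ is redundant, so \eqref{eq:sylvester} gives $F(S_k)=k(k+1)-k-(k+1)=k^2-k-1$. The right-hand side of \eqref{eq:main-bound} is $(k+1)\lceil 2k/2\rceil-2k-1=k^2-k-1$, so equality holds for every $k\ge2$, giving infinitely many instances.

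For general $n$ I would mimic this pattern: the two smallest generators should be ``Sylvester-like'' and should determine the gaps, while $a_{n-1}$ should be as small as possible relative to $a_n$. The first candidate I would test is a family in which $a_1,\ldots,a_{n-1}$ form a structured block (for instance a translate of an arithmetic progression, possibly with one element removed as in the extremal constructions of~\cite{H98,HP05}), and $a_n=(n-1)c$ with $c$ chosen just large enough that $a_n>a_{n-1}$ and $a_n\in\langle a_1,\ldots,a_{n-1}\rangle$. To compute $F$, I would use the Apéry set with respect to $a_1$: for each residue $r \bmod a_1$, find the least element of the semigroup in that class. Then $F=\max_r w_r-a_1$, and the maximum should come from a residue that needs the most copies of the ``long'' generator, exactly as in the proof of Roberts' formula for arithmetic progressions. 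I would fix the residue of $k$ modulo $n-1$ (and modulo $n-2$ if the block has $n-1$ terms) so that all floors and ceilings become linear in $k$, and then verify the inequality as a polynomial identity in the parameter.

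The main obstacle is choosing the family for $n\ge4$. Small checks show that naive generalizations fail by a linear term. Examples are consecutive blocks $\{k,\ldots,k+n-1\}$, and blocks with a redundant $a_n=(n-1)k$ or $(n-1)c$. In each of these, either $a_{n-1}$ is too large or the ceiling rounds unfavourably, and the block's Frobenius number $\approx k^2/(n-2)$ is too small to match $a_{n-1}c$. So the real work is to find a configuration in which $a_{n-1}\lceil a_n/(n-1)\rceil$ exceeds $F$ by at most $a_n+1$. If the redundant-$a_n$ idea cannot be made to work, I would try a family built via Johnson's reduction instead. There, $a_1,\ldots,a_{n-1}$ share a common factor $d$ and $a_n$ is coprime to $d$, so that $F(S)=d\,F(a_1/d,\ldots,a_{n-1}/d,a_n)+(d-1)a_n$. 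This lets a factor of order $a_n$ enter $F$ multiplicatively, which is what the term $a_{n-1}a_n/(n-1)$ in the bound suggests is needed.
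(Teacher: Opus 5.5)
Your $n=3$ family $\{k,k+1,2k\}$ is correct, and it is exactly the paper's construction for $n=3$. For $n\ge4$, however, the proposal contains no construction. It lists candidate shapes, notes that the ones you tested fail, and stops, so the theorem is not proved for any $n\ge4$. The missing idea is that the Sylvester pair should be $a_1$ and $a_{n-1}$, not the two smallest generators. \emph{Every} other generator should be a multiple of $a_1$, including the intermediate ones $a_2,\ldots,a_{n-2}$ and not just $a_n$, so that the semigroup collapses to a two-generator one. Concretely, for $q\ge2$ take
\[
S_{n,q}=\{q,2q,\ldots,(n-2)q,\;(n-2)q+1,\;(n-1)q\}.
\]
Then $\langle S_{n,q}\rangle=\langle q,(n-2)q+1\rangle$, and Sylvester's formula gives $F(S_{n,q})=((n-2)q+1)q-q-((n-2)q+1)$. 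Since $a_{n-1}=(n-2)q+1$, $a_n=(n-1)q$ and $\lceil a_n/(n-1)\rceil=q$, this is precisely the right-hand side of \eqref{eq:main-bound}. Hamidoune's extremal sets, which you mention, already have this shape. The set $\{q,2q,\ldots,(n-2)q,(n-1)q-1,(n-1)q\}$ misses the bound by exactly $q-2$, and moving the single non-multiple of $q$ down to $(n-2)q+1$ closes that gap.

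The directions you propose instead cannot be rescued by fixing residues of the parameter. Take the redundant-$a_n$ block version, with $a_1,\ldots,a_{n-1}$ equal to $k,\ldots,k+n-2$ and $a_n=(n-1)k$. All $n-1$ small generators are active, so $F\approx k^2/(n-2)$, while the target is $k^2-k-1$. For $n\ge4$ the shortfall is quadratic, not linear. The Johnson-reduction fallback, with $d\ge2$ dividing $a_1,\ldots,a_{n-1}$ and $\gcd(d,a_n)=1$, is provably hopeless. The reduced tuple $(a_1/d,\ldots,a_{n-1}/d,a_n)$ is again increasing and coprime with largest element $a_n$. Apply Theorem~\ref{thm:upper} to it and combine with $F(S)=d\,F(a_1/d,\ldots,a_{n-1}/d,a_n)+(d-1)a_n$. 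This yields $F(S)\le a_{n-1}\lceil a_n/(n-1)\rceil-a_n-d$, strictly below the target. The common factor must instead sit on all generators except $a_{n-1}$. That is the family above, for which the same reduction gives $F(S_{n,q})=(q-1)a_{n-1}-q$.
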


The bound also has a density interpretation. If
$S_i=\{a_1,\ldots,a_i,a_n\}$ has greatest common divisor one,
then applying Theorem~\ref{thm:upper} to $S_i$ gives
\begin{equation}\label{eq:prefix-bound}
    F(S)\le
    \min_{\substack{1\le i\le n-1\\
                    \gcd(a_1,\ldots,a_i,a_n)=1}}
    \left\{a_i\left\lceil\frac{a_n}{i}\right\rceil-a_n-1\right\}.
\end{equation}
For each $i$ with $\gcd(a_1,\ldots,a_i,a_n)=1$, the ratio $i/a_i$
is the density of the first $i$ generators in $\{1,\ldots,a_i\}$.
Ignoring the ceiling correction, the controlling factor in
\eqref{eq:prefix-bound} is $a_i/i$, the reciprocal of this density.
Thus we can choose the prefix with the largest density among those
satisfying the gcd condition. In particular, the second-largest
generator can be deleted whenever the remaining set is still coprime.

\subsection{Related works}
Put $A=S\cup\{0\}$ and $t=a_n$, so $|A|=n+1$.
Lev's estimates for iterated sumsets~\cite{L96} imply the explicit
cardinality bound
\[
    |jA|\ge n+1+\sum_{k=2}^{j}\min\{t,k(n-1)+1\}
    \qquad (j\ge2).
\]
This provides another route to the Erd\H{o}s--Graham extremal problem.
Hamidoune~\cite{H98} went further in determining which sets actually
maximize the Frobenius number when $n$ and $t$ are fixed. For
$t\ge n(n-1)+2$, he determined this maximum and classified the
maximizing sets. For example, if $n\ge6$, $q\ge3$, and
$t=(n-1)q$, the unique maximizer among sets of size $n$ with largest
element $t$ is $\{q,2q,\ldots,(n-1)q,t-1\}$; its Frobenius number is
$(q-1)(t-2)-1$.

Hamidoune and Plagne~\cite{HP05} explained the arithmetic structure
behind slow sumset growth. If $j\ge2$ and
$|jA|\le j(j+1)n/2$, then either $n\ge t/j$, or $A$ is a union of
two arithmetic progressions with a common difference, a progression
modulo $t$, or, for even $t$, a set of the form
$\{0,t/2,t,x,x+t/2,2x\}$ with $0<x<t/2$. Thus, when $A$ is sparse
in $\{0,\ldots,t\}$, a small $j$-fold sumset forces a specific
arithmetic pattern. 

\subsection{Technical overview}
The central additive tool in Erd\H{o}s and Graham's proof is
Kneser's theorem~\cite{Kn53}. We use the following finite-group form.
\begin{theorem}[Kneser's theorem]\label{thm:kneser-intro}
Let $X,Y$ be nonempty finite subsets of an abelian group $G$, and put
$H=\{h\in G:X+Y+h=X+Y\}$, the symmetry set (or stabilizer) of
$X+Y$. Then
\[
    |X+Y|\ge |X+H|+|Y+H|-|H|.
\]
\end{theorem}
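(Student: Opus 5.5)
We prove Kneser's theorem (Theorem~\ref{thm:kneser-intro}) in two stages: first reduce to the case where $X+Y$ has trivial stabilizer, then prove that case by induction on $|Y|$ using Dyson's $e$-transform. Throughout we may assume $G$ is finite, replacing $G$ by the subgroup generated by $X\cup Y$ if necessary; this changes no stabilizer.

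\emph{Reduction.} Let $\pi\colon G\to G/H$. Then $X+Y$ is a union of $H$-cosets, so $X+Y=(X+H)+(Y+H)$ and $|X+Y|=|H|\,|\pi(X)+\pi(Y)|$. The stabilizer of $\pi(X)+\pi(Y)$ in $G/H$ is trivial, since any element stabilizing it lifts to an element stabilizing $X+Y$. Also $|X+H|=|H|\,|\pi(X)|$, and similarly for $Y$. Hence it suffices to prove
\[
|A+B|\ge |A|+|B|-1
\]
whenever $A,B$ are nonempty and $A+B$ has trivial stabilizer. Multiplying this inequality in $G/H$ by $|H|$ gives the theorem. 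We will actually prove the general statement directly, by induction on $|Y|$, because the induction hypothesis needs the full form with $H$.

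\emph{Induction via the $e$-transform.} The base case $|Y|=1$ is trivial. For $e\in X-Y$ put
\[
X(e)=X\cup(Y+e),\qquad Y(e)=Y\cap(X-e).
\]
These satisfy three properties. First, $X(e)+Y(e)\subseteq X+Y$. Second, $|X(e)|+|Y(e)|=|X|+|Y|$. Third, $Y(e)\neq\emptyset$.

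Suppose first that $Y(e)=Y$ for every such $e$. Then $Y+e\subseteq X$ for all $e\in X-Y$. This gives $X+Y-Y\subseteq X$, hence $X+(Y-Y)=X$. Consequently $X+Y$ is stabilized by the subgroup generated by $Y-Y$, so $Y+H$ is a single $H$-coset and $X+H=X$. The inequality then reads $|X+Y|\ge|X|$, which is obvious.

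Otherwise choose $e$ with $|Y(e)|<|Y|$, and apply the induction hypothesis to the pair $(X(e),Y(e))$. Let $K$ be the stabilizer of $X(e)+Y(e)$. This gives
\[
|X+Y|\ge|X(e)+Y(e)|\ge|X(e)+K|+|Y(e)+K|-|K|.
\]

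\emph{Main obstacle.} The difficulty is transferring this bound back to the original pair, because $K$ need not equal $H$ and $X(e)+Y(e)$ may be strictly smaller than $X+Y$.

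The first thing I would try is a minimal-counterexample argument. Among all counterexamples, fix $|X|+|Y|$ and take $|Y|$ minimal. Among the admissible $e$, choose one making $|Y(e)|$ minimal, or alternatively one making $K$ maximal.

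If $K=\{0\}$, we get $|X+Y|\ge|X|+|Y|-1$. That is at least the required bound, since $|X+H|+|Y+H|-|H|\le |X|+|Y|-1$ whenever the right-hand side of the theorem exceeds the trivial bound; this last inequality needs separate handling when $H\neq\{0\}$.

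If $K\neq\{0\}$, consider $X_1=X+K$ or $X\cup(X(e)+K)$, and replace $Y$ by $Y+K$ intersected suitably. Then count the $K$-cosets that $X+Y$ meets but $X(e)+Y(e)$ misses. I expect to show that each $K$-coset $C$ meeting $Y$ contributes $|(X+Y)\cap (x+C)|\ge |X\cap(\cdot)|+|Y\cap C|$ by a pigeonhole argument inside the coset, which would give $|X+Y|\ge|X+K|+|Y+K|-|K|$. Since $K\subseteq$ the stabilizer of $X+Y$ then follows by maximality, this would yield the claim.

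This coset bookkeeping is the delicate step. If it resists, the fallback is to follow the standard proof in Nathanson's or Tao--Vu's book, which treats this case through the pair $\bigl(X\cup(X(e)+K),\,Y\cup(Y(e)+K)\bigr)$ and a second application of induction.
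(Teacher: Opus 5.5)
The paper gives no proof of this statement; it quotes Kneser's theorem from~\cite{Kn53} and uses it as a black box. Judged on its own, your proposal has a genuine gap exactly where the content of Kneser's theorem lies. Your reduction to an aperiodic sumset and the three properties of the $e$-transform are correct. The case $K=\operatorname{Sym}_G(X(e)+Y(e))\neq\{0\}$, however, is not proved, and the ingredients you sketch do not work.

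\begin{itemize}
\item Pigeonhole inside a $K$-coset only says that two pieces whose sizes add up to more than $|K|$ fill the target coset. For smaller pieces no bound of the form $|P|+|Q|$ holds; this fails already for singletons. Even $|P|+|Q|-1$ fails, for instance for $P=Q=\{0,2\}$ in $K=\mathbb{Z}_4$. Any Cauchy--Davenport-type bound inside $K$ would be an instance of the theorem you are proving.
\item The claim that $K\subseteq\operatorname{Sym}_G(X+Y)$ follows ``by maximality'' is unsupported, since $X(e)+Y(e)$ is merely a subset of $X+Y$. In the aperiodic setting of your reduction, $\operatorname{Sym}_G(X+Y)=\{0\}$, so this claim amounts to $K=\{0\}$, which is precisely what has to be shown.
\item Both components of your fallback pair $\bigl(X\cup(X(e)+K),\,Y\cup(Y(e)+K)\bigr)$ contain the original sets. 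Your induction on $|Y|$ therefore cannot be applied to it. A complete proof needs a different well-founded choice of counterexample, together with a real argument exploiting that $X(e)+Y(e)\subseteq X+Y$ is $K$-periodic. Deferring that step to Nathanson or Tao--Vu postpones the proof rather than supplying it.
\end{itemize}

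The case $K=\{0\}$ is also mishandled as written. The inequality $|X+H|+|Y+H|-|H|\le|X|+|Y|-1$ is false in general. In $G=\mathbb{Z}_6$ with $X=\{0,1,2\}$ and $Y=\{0,3\}$ we have $X+Y=G=H$, so the left side is $6$ and the right side is $4$. The fix is to use your own reduction instead of proving the general form directly:
\begin{itemize}
\item Prove the aperiodic statement $|A+B|\ge|A|+|B|-1$ by induction on $|B|$ over all abelian groups.
\item In the inductive step, invoke the full form for $(A(e),B(e))$. It follows from the aperiodic statement for the images in $G/K$, because the image of $B(e)$ has at most $|B(e)|<|B|$ elements.
\item With this setup $H=\{0\}$, and the case $K=\{0\}$ finishes at once.
\end{itemize}

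Two smaller points:
\begin{itemize}
\item The subgroup generated by $X\cup Y$ need not be finite (take $G=\mathbb{Z}$), although nothing in your argument actually uses finiteness.
\item In the case $Y(e)=Y$ for all $e$, the conclusion $X+H=X$ needs one more step. You have $X+(Y-Y)=X$, which gives $X+Y=X+y_0$ for any $y_0\in Y$, hence $H=\operatorname{Sym}_G(X)$. Knowing only that $\langle Y-Y\rangle\subseteq H$ does not suffice.
\end{itemize}
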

Work in $G=\mathbb Z_{a_n}$ with
$A=\{0,a_1,\ldots,a_{n-1}\}$, and write $kA$ for its $k$-fold
sumset. The aim is to cover all residue classes modulo $a_n$ while
controlling the sizes of their integer representatives. Set
$k_0=\lceil a_n/(n-1)\rceil$, which is roughly $a_n/n$. If $k_0A$
were proper and had trivial symmetry set, Kneser's theorem would give
$|k_0A|\ge k_0(n-1)+1>a_n$, a contradiction. Thus the only case
requiring further work is a proper $k_0A$ with a nontrivial symmetry
set $H$.

Erd\H{o}s and Graham instead take $k=\lfloor a_n/n\rfloor$,
show that $kA$ contains more than half the residues, and infer that
$2kA=G$. Bounding every summand by $a_{n-1}$ produces the factor
$2$ in~\eqref{eq:erdos-graham}. 

We therefore pass to $G/H$ and let $B$ be the image of $A$. Since
$k_0A$ is proper and $H$-periodic, $k_0B$ does not cover this smaller
group. Write $c=|G/H|-k_0(|B|-1)\ge2$. Kneser's theorem shows that at most $c-1$
quotient residues are still missing. We can cover them with at most
$c-1$ further copies of $B$. The point is that a larger $c$ also
gives us cheaper summands. Indeed, in the proof of
Lemma~\ref{lem:weak-upper} we show that, if $h=|H|$ and
$\Delta=|A+H|-|A|$ counts the empty positions in the $H$-cosets
met by $A$, then $(\Delta+1)/h\le1-c/k_0$. This means that a
larger $c$ forces fewer empty positions in these cosets. Each
nonzero class of $B$ therefore has more representatives in $A$,
and its least representative is at most $(1-c/k_0)a_{n-1}$.
When $c$ is
small, a representative may be larger, but few extra summands are
needed. In this way, we keep the total value at most $k_0a_{n-1}$.
Finally, a unique-representation sumset inequality of Scherk and
Kemperman~\cite{S55,K60} shows that $k_0a_{n-1}-a_n$ itself is
representable. Since the preceding argument bounds the Frobenius
number by this integer, we conclude that
$F(S)\le k_0a_{n-1}-a_n-1$. 

\section{Preliminaries}\label{sec:preliminaries}
\begin{definition}[Symmetry set]\label{def:symmetry-set}
For a subset $S$ of a finite abelian group $G$, its symmetry set is
\[
    \operatorname{Sym}_G(S)=\{h\in G\mid S+h=S\}.
\]
This subgroup is also called the \emph{stabilizer} of $S$. We say
that $S$ is $H$-periodic when $S+H=S$ for a subgroup $H\le G$.
\end{definition}
We will use the following iterated form of Kneser's theorem~\cite{Kn53}.
\begin{theorem}\label{coro:kneser2}
Let $S_1,\ldots,S_m$ be nonempty subsets of a finite abelian group
$G$, and put $S=S_1+\cdots+S_m$ and $H=\operatorname{Sym}_G(S)$. Then
\[
    |S|\ge\sum_{i=1}^m|S_i|-(m-1)|H|.
\]
\end{theorem}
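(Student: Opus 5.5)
We derive the iterated form by induction on $m$ from the two-set Kneser theorem (Theorem~\ref{thm:kneser-intro}). The case $m=1$ is trivial, since $|S_1|\ge|S_1|$. For $m\ge2$, put $T=S_1+\cdots+S_{m-1}$, so that $S=T+S_m$, and let $H=\operatorname{Sym}_G(S)$ and $K=\operatorname{Sym}_G(T)$.

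\textbf{Step 1 (one application of Kneser).} Theorem~\ref{thm:kneser-intro} applied to $X=T$, $Y=S_m$ gives
\[
|S|\ge |T+H|+|S_m+H|-|H|\ge |T+H|+|S_m|-|H|.
\]
It therefore suffices to show
\[
|T+H|\ge \sum_{i=1}^{m-1}|S_i|-(m-2)|H|.
\]
The induction hypothesis applied directly to $T$ would only give this with $|K|$ in place of $|H|$. Since $K$ and $H$ are unrelated in general, this is the main obstacle.

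\textbf{Step 2 (pass to the quotient).} To handle this, we work modulo $H$ rather than with $T$ itself. Let $\pi\colon G\to G/H$ be the projection, and put $\bar S_i=\pi(S_i)$ and $\bar T=\bar S_1+\cdots+\bar S_{m-1}=\pi(T)$. Then $|T+H|=|H|\,|\bar T|$ and $|S_i|\le |H|\,|\bar S_i|$. It is therefore enough to prove
\[
|\bar T|\ge\sum_{i=1}^{m-1}|\bar S_i|-(m-2).
\]
Since $S$ is $H$-periodic, $\bar S=\bar T+\bar S_m$ has trivial stabilizer in $G/H$.

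\textbf{Step 3 (a stronger statement with trivial stabilizer).} We prove a lemma by induction on the number of summands: if $\bar U_1+\cdots+\bar U_r$ is a subsum of a sumset $\bar U_1+\cdots+\bar U_m$ with trivial stabilizer, then
\[
|\bar U_1+\cdots+\bar U_r|\ge\sum_{i\le r}|\bar U_i|-(r-1).
\]
Let $L$ be the stabilizer of $\bar U_1+\cdots+\bar U_r$. By the induction hypothesis in the group $(G/H)/L$, Kneser's theorem gives
\[
|\bar U_1+\cdots+\bar U_r|\ge\sum_{i\le r}|\bar U_i+L|-(r-1)|L|.
\]
Each term satisfies $|\bar U_i+L|\ge|\bar U_i|$, but the loss factor $|L|$ may exceed $1$. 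So we induct on $r$ in a better way.

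Write $\bar V=\bar U_1+\cdots+\bar U_{r-1}$. Kneser's theorem gives $|\bar V+\bar U_r|\ge|\bar V+L'|+|\bar U_r+L'|-|L'|$, where $L'$ is the stabilizer of $\bar V+\bar U_r$. If $L'$ is trivial, the claim follows directly from the induction hypothesis on $\bar V$. If $L'$ is nontrivial, we quotient by $L'$ and recurse, accounting for $|\bar U_i+L'|\ge|\bar U_i|+(\text{holes})$.

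This bookkeeping is exactly the standard proof of the iterated Kneser theorem, and it is where the real work lies. The cleanest way to carry it out is a single induction on $m$ for the full statement over all finite abelian groups simultaneously. In the inductive step, apply the induction hypothesis to the images $\bar S_1,\ldots,\bar S_{m-1}$ in $G/H$. Their sum $\bar T$ has some stabilizer $\bar K$. Then $\bar T$ is $\bar K$-periodic, and $\bar S=\bar T+\bar S_m$ is $\bar K$-periodic too, so $\bar K\subseteq\operatorname{Sym}(\bar S)=\{0\}$. Hence $\bar K$ is trivial, and the induction hypothesis yields the displayed inequality of Step 2 with loss $(m-2)\cdot1$.

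\textbf{Step 4 (conclude).} Multiplying the inequality of Step 2 by $|H|$ gives $|T+H|\ge\sum_{i<m}|S_i|-(m-2)|H|$. Combining this with Step 1 gives
\[
|S|\ge\sum_{i=1}^m|S_i|-(m-1)|H|,
\]
as required.
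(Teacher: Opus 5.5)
Your proof is correct, but it takes a longer route than the paper's. The paper never passes to a quotient. It observes that a period of a partial sum $T_j=S_1+\cdots+S_j$ is also a period of $S=T_j+S_{j+1}+\cdots+S_m$, so $\operatorname{Sym}_G(T_j)\subseteq H$. Each two-set Kneser step then gives $|T_j|\ge|T_{j-1}|+|S_j|-|H|$, and these inequalities telescope.

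That same observation shows your Step~1 remark that $K$ and $H$ are ``unrelated in general'' is false. In fact $K\subseteq H$, by exactly the argument you later run in $G/H$. So the induction hypothesis applied directly to $T$ already gives $|T+H|\ge|T|\ge\sum_{i<m}|S_i|-(m-2)|H|$, and Steps~2--3 are not needed.

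The middle of Step~3 (the subsum lemma with $L$, $L'$ and ``holes'') is unfinished, but you abandon it. The closing induction over all finite abelian groups is complete and valid: $\bar S$ has trivial stabilizer in $G/H$, hence so does $\bar T$, hence $|\bar T|\ge\sum_{i<m}|\bar S_i|-(m-2)$.

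What your quotient route buys appears if you keep the $+H$ rather than bounding $|H|\,|\bar S_i|\ge|S_i|$. You then get the stronger standard form $|S|\ge\sum_{i=1}^m|S_i+H|-(m-1)|H|$. The paper's telescoping, as written, gives only the weaker inequality, which is all that Lemma~\ref{lem:weak-upper} uses.
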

\begin{proof}
Put $T_j=S_1+\cdots+S_j$. Any period of $T_j$ remains a period
after adding the other sets, so $\operatorname{Sym}_G(T_j)\subseteq H$. The
two-set form of Kneser's theorem therefore gives
$|T_j|\ge |T_{j-1}|+|S_j|-|H|$ for $2\le j\le m$.
Adding these inequalities proves the result.
\end{proof}

\begin{lemma}[Kemperman--Scherk theorem~\cite{S55,K60}]\label{lem:kemperman-scherk}
Let $X,Y$ be finite nonempty subsets of an abelian group. Every
$z\in X+Y$ has at least $|X|+|Y|-|X+Y|$ representations $z=x+y$
with $x\in X$ and $y\in Y$.
\end{lemma}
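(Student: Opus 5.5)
The statement to prove is the Kemperman--Scherk lemma: every $z\in X+Y$ has at least $|X|+|Y|-|X+Y|$ representations $z=x+y$. The plan is the classical Scherk argument. We translate so that $z=0$ and then apply a Kneser-type count to a modified pair of sets.

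\textbf{Normalization.} Fix $z\in X+Y$ and let $r$ be the number of pairs $(x,y)\in X\times Y$ with $x+y=z$. Replacing $Y$ by $Y-z$ changes neither $|Y|$ nor $|X+Y|$ (the sumset is merely translated), and it turns the representations of $z$ into representations of $0$. So we may assume $z=0$. Then $r=|X\cap(-Y)|\ge1$, and the claim becomes
\[
|X+Y|\ge |X|+|Y|-|X\cap(-Y)|.
\]

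\textbf{Main step.} We prove this by induction, using a Dyson $e$-transform that keeps the number of representations of $0$ under control. Consider the pair $(X,Y)$ with $0$ represented exactly $r$ times. If $r=|Y|$, then $-Y\subseteq X$, and $|X+Y|\ge|X|=|X|+|Y|-r$ holds trivially. Otherwise, choose $e$ so that the transform
\[
X'=X\cup(Y'+e)\text{-type union},\qquad Y'=Y\cap(X-e)
\]
(in the symmetric form adapted to representations of $0$) satisfies three things. First, $|X'|+|Y'|=|X|+|Y|$. Second, $X'+Y'\subseteq X+Y$. Third, the number of representations of $0$ does not increase.

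The cleaner route is Scherk's original one, which inducts on $|Y|$ with the following invariant. Let $Y_0=\{y\in Y:-y\in X\}$; this set is nonempty because $0\in X+Y$. If $X+Y_0\supseteq X+y$ for every $y$, we close out directly. Otherwise we pick $x\in X$ and $y\in Y$ with $x+y\notin$ the relevant set and apply the transform with $e=x$. This strictly shrinks $Y$ while preserving the sum $|X|+|Y|$, preserves the containment of sumsets, and preserves the count of zero-representations.

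\textbf{Expected obstacle.} The delicate point is checking that the $e$-transform does not increase the number of representations of $0$. Concretely, one must verify that a new pair in $X'\times Y'$ summing to $0$ corresponds injectively to an old pair. This is where the choice of $e$ matters. I would first try $e=x_0-y_0$-style choices tied to an existing representation and verify injectivity directly.

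The other steps are routine. The sizes add correctly by inclusion--exclusion, the sumset inclusion is immediate, and the induction terminates when $Y'=Y$. In that terminal case $X+Y$ contains enough translates to give the bound directly.
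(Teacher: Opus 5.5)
The paper gives no proof of this lemma; it quotes it from Scherk and Kemperman. Your argument therefore has to stand on its own, and as written it does not. The transform is never actually defined (``$X'=X\cup(Y'+e)$-type union \ldots in the symmetric form'' is circular), and $e$ is never fixed. The injectivity you single out as the delicate point is left unverified. Both terminal cases (``we close out directly'', ``$X+Y$ contains enough translates'') are bare assertions.

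More importantly, your list of invariants omits an indispensable one: $0$ must still be represented in $X'+Y'$. The inequality $|X+Y|\ge|X|+|Y|-|X\cap(-Y)|$ is false without $0\in X+Y$ (take $X=Y=\{1\}$ in $\mathbb Z$), so if $r'=0$ the induction hypothesis says nothing. Your concrete choice $e=x\in X$ does not secure $r'\ge1$ when only $z=0$ has been normalized. For $X=\{1,10\}$, $Y=\{-1,5\}$ in $\mathbb Z$, one gets $Y\cap(X-e)=\emptyset$ for both $e\in X$. In fact the choice of $e$ is irrelevant to $r'\le r$, which holds for every $e$. What $e$ must guarantee is $r'\ge1$ and $Y'\ne Y$.

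To repair the argument, translate once more so that $0\in X\cap Y$: replace $X,Y$ by $X-x_0$, $Y+x_0$ for a representation $0=x_0+y_0$, which changes neither $X+Y$ nor $r$. Set $X'=X\cup(Y+e)$, $Y'=Y\cap(X-e)$ with $e\in X$, and induct on $|Y|$. Then the following hold:
\begin{itemize}
\item $0\in X'\cap Y'$, so $r'\ge1$;
\item $|X'|+|Y'|=|X|+|Y|$ and $X'+Y'\subseteq X+Y$;
\item $r'\le r$, because sending $w\in X'\cap(-Y')$ to $w$ if $w\in X$, and to $e-w$ if $w\notin X$, injects $X'\cap(-Y')$ into $X\cap(-Y)$. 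The two kinds of image $v$ are distinguished by whether $e-v\in X$.
\end{itemize}
If $Y'=Y$ for every $e\in X$, then $X+Y=X$. Hence $Y\subseteq\operatorname{Sym}(X)$, and since $0\in X$, also $-Y\subseteq X$. This is exactly your trivial case $r=|Y|$.

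Alternatively, because the paper already relies on Kneser's theorem, the lemma follows in a few lines. Put $H=\operatorname{Sym}(X+Y)$, $X_0=X\cap(x_0+H)$ and $Y_0=Y\cap(y_0+H)$. Then $|X+H|\ge|X|+|H|-|X_0|$ and $|Y+H|\ge|Y|+|H|-|Y_0|$. Moreover $|X_0|+|Y_0|\le|H|+r$, since $X_0$ and $-Y_0$ both lie in $x_0+H$ and meet inside $X\cap(-Y)$. Substituting into $|X+Y|\ge|X+H|+|Y+H|-|H|$ gives $|X+Y|\ge|X|+|Y|-r$.
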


\begin{corollary}\label{lem:Kem}
If some element of $X+Y$ has a unique representation as $x+y$,
then $|X+Y|\ge |X|+|Y|-1$.
\end{corollary}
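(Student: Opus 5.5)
The statement to prove is Corollary~\ref{lem:Kem}. It is an immediate specialization of Lemma~\ref{lem:kemperman-scherk}, so I would simply quote that lemma and read off the conclusion.

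Suppose $z\in X+Y$ has exactly one representation $z=x+y$ with $x\in X$ and $y\in Y$. By Lemma~\ref{lem:kemperman-scherk}, the number of representations of $z$ is at least $|X|+|Y|-|X+Y|$. Since this number equals $1$, we get
\[
    1\ge |X|+|Y|-|X+Y|,
\]
and rearranging gives $|X+Y|\ge |X|+|Y|-1$, which is the claim.

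There is essentially no obstacle here. The only point to check is that the hypotheses of Lemma~\ref{lem:kemperman-scherk} are met. The sets $X$ and $Y$ are finite, nonempty subsets of an abelian group, as in the lemma's setting. Also, $z$ lies in $X+Y$, which is guaranteed because $z$ is assumed to have a representation at all. No finiteness of the ambient group is needed, matching the generality of the lemma.
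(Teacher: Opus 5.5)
Your proof is correct and matches the paper. The paper gives no separate proof and treats this corollary as an immediate consequence of Lemma~\ref{lem:kemperman-scherk}, which is exactly what you do: set the representation count of the uniquely represented element to $1$ and rearrange.
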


\section{Proofs of the main results}\label{sec:bounds}
\subsection{Upper bound}

\begin{lemma}\label{lem:weak-upper}
For every coprime tuple $0<a_1<\cdots<a_n$ with $n\ge2$,
\[
    F(a_1,\ldots,a_n)
    \le a_{n-1}\left\lceil\frac{a_n}{n-1}\right\rceil-a_n.
\]
\end{lemma}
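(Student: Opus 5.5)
We work in $G=\mathbb Z_{a_n}$ with $A=\{0,a_1,\ldots,a_{n-1}\}$, so $|A|=n$. Put $k_0=\lceil a_n/(n-1)\rceil$. The target is the following covering statement: every residue $r\in G$ has a representative $\sum_j x_j a_j$ with $x_j\ge0$ and total integer value at most $k_0a_{n-1}$. Once this holds, any integer $N>k_0a_{n-1}-a_n$ is representable. Indeed, take the representative $m\equiv N\pmod{a_n}$ with $m\le k_0a_{n-1}$. Then $N-m>-a_n$ and $N\equiv m$, so $N-m\ge0$ is a multiple of $a_n$, and $N=m+(\text{copies of }a_n)$. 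This gives $F\le k_0a_{n-1}-a_n$.

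Let $H=\operatorname{Sym}_G(k_0A)$. If $k_0A=G$, we are done, since each element of $k_0A$ is a sum of at most $k_0$ generators from $A$, each at most $a_{n-1}$.

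Otherwise $k_0A$ is proper. Apply Theorem~\ref{coro:kneser2} with $S_i=A+H$:
\[
|k_0A|\ge k_0|A+H|-(k_0-1)h, \qquad h=|H|.
\]
If $H$ were trivial, this would give $|k_0A|\ge k_0(n-1)+1>a_n$, which is impossible. So $H$ is nontrivial, and we pass to $G/H$. Let $B$ be the image of $A$, and set $b=|B|$, $q=|G/H|=a_n/h$, $c=q-k_0(b-1)$. Kneser in the quotient gives $|k_0B|\ge k_0(b-1)+1$. Since $k_0B\ne G/H$, we get $c\ge2$, and at most $c-1$ classes of $G/H$ are missed by $k_0B$. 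Adding one further copy of $B$ enlarges the sumset by at least one class whenever it is proper, since $0\in B$ and the coset structure is preserved. Hence $(k_0+c-1)B=G/H$. Every class is therefore covered by $k_0$ sums from $A$ (which is $H$-periodic) plus at most $c-1$ extra summands from $A$.

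The cost of those extra summands is the crux. Write $|A+H|=bh$ and $\Delta=bh-n$. From $k_0(n-1)\ge a_n=qh=(k_0(b-1)+c)h$, we get
\[
n-1\ge (b-1)h+ch/k_0,
\]
so
\[
\Delta+1=bh-n+1\le h-ch/k_0, \qquad\text{i.e.}\qquad \frac{\Delta+1}{h}\le 1-\frac{c}{k_0}.
\]
A nonzero class $\beta\in B$ meets $A$ in at least $h-\Delta$ elements, all lying in $\{a_1,\ldots,a_{n-1}\}$. Because they are congruent modulo $h$ and distinct, the least of them is at most $a_{n-1}-(h-\Delta-1)h$. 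I want this to be at most roughly $(1-c/k_0)a_{n-1}$. Plausibly one needs $a_{n-1}\le a_n=qh$ together with a more careful spacing count to get that form.

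The covering will then use an ordering trick. For the missing classes, replace some of the $k_0$ original summands by cheap class representatives rather than appending expensive ones, so that the total number of summands stays $k_0$. A sum uses at most $k_0-(c-1)$ terms of size up to $a_{n-1}$ and $c-1$ terms of size at most $(1-c/k_0)a_{n-1}$. Alternatively, one can bound the total directly:
\[
(k_0-c+1)a_{n-1}+(c-1)(1-c/k_0)a_{n-1}\le k_0a_{n-1}.
\]
This last inequality reduces to $(c-1)\cdot c/k_0\ge0$ after expansion, provided the bookkeeping justifies using only $k_0-c+1$ full-cost summands, i.e.\ that $(k_0-c+1)B$ combined with the $c-1$ cheap classes still covers $G/H$. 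Within each coset, $H$-periodicity of $k_0A$ handles the fiber.

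The main obstacle is this bookkeeping step: making rigorous that each class can be reached with the reduced budget, and matching the least-representative estimate to $(1-c/k_0)a_{n-1}$. I expect to iterate Kneser on partial sums $jB$, tracking the growth $|jB|\ge j(b-1)+1$, to show the cheap extra summands suffice.
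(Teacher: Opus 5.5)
Your setup agrees with the paper's through the bound $(\Delta+1)/h\le 1-c/k_0$. The covering step, which you flag as the main obstacle, is a genuine gap, and the plan you sketch for it cannot work.

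Any sum of $k_0$ elements of $A$ lies in $k_0A$. The residues you still need are exactly those whose class lies outside $k_0B$, and $(k_0-c+1)B+(c-1)B=k_0B\ne G/H$. So no scheme that keeps the number of summands at $k_0$ can reach them. Also, $H$-periodicity of $k_0A$ helps only if you spend all $k_0$ full-cost summands, which uses up the budget.

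The missing idea is to use \emph{more} than $k_0$ summands, all but two of them cheap, and to lift from $G/H$ to $G$ with two elements instead of periodicity:
\begin{itemize}
\item Every nonzero $\beta\in B$ has a representative $\mu_\beta\le\alpha a_{n-1}$, where $\alpha=(\Delta+1)/h$.
\item Since $\alpha<1$ forces $\Delta<h-1$, the fibers $A_\beta=A\cap(\beta+H)$ satisfy $|A_\beta|+|A_0|\ge 2h-\Delta>h$ for $\beta\ne0$. By pigeonhole, $A_\beta+A_0=\beta+H$.
\item For $\rho\notin H$, write its class as $b_1+\cdots+b_r$ with nonzero $b_i\in B$ and $r\le k_0+c-1$. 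Use $\mu_{b_i}$ for $i\ge2$, and choose $x\in A_{b_1}$, $y\in A_0$ so that the sum is $\rho$ exactly.
\item The total value is at most $(2+\alpha(r-1))a_{n-1}\le\bigl(2+(1-c/k_0)(k_0+c-2)\bigr)a_{n-1}=\bigl(k_0-c(c-2)/k_0\bigr)a_{n-1}\le k_0a_{n-1}$.
\item Residues in $H$ are covered by $H\subseteq k_0A$.
\end{itemize}

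Your least-representative estimate also has an error. $H$ consists of the multiples of $q=a_n/h$, so elements of one coset are congruent modulo $q$, not modulo $h$. The correct bound is $\mu_\beta\le a_{n-1}-(h-\delta_\beta-1)q$, where $\delta_\beta=h-|A_\beta|\le\Delta$. With that correction, your own hint finishes the step: since $q=a_n/h>a_{n-1}/h$, you get $\mu_\beta\le\frac{\delta_\beta+1}{h}a_{n-1}\le\alpha a_{n-1}$. This is slightly more direct than the paper, which also uses $\mu_\beta\le(\delta_\beta+1)q$ and combines the two bounds.

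Two smaller points need justification:
\begin{itemize}
\item The growth step $(k_0+c-1)B=G/H$ needs $B$ to generate $G/H$, which follows from $\gcd(a_1,\ldots,a_n)=1$; $0\in B$ alone is not enough.
\item The bound $|k_0B|\ge k_0(b-1)+1$ requires $k_0B$ to have trivial stabilizer. This holds because a nonzero period would lift to a period of $k_0A$ outside $H$.
\end{itemize}
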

\begin{proof}
Put $G=\mathbb Z_{a_n}$, $A=\{0,a_1,\ldots,a_{n-1}\}$, and
$k=\lceil a_n/(n-1)\rceil$. We regard $A$ and its sumsets as subsets
of $G$. For $\rho\in G$, let $R(\rho)$ be the least nonnegative sum of
$a_1,\ldots,a_{n-1}$ congruent to $\rho$ modulo $a_n$. Such a sum
exists because the generators are coprime. In each residue class,
the representable integers start at $R(\rho)$; hence the last gap
there, when nonnegative, is $R(\rho)-a_n$. Thus
\[
    F(a_1,\ldots,a_n)=\max_{\rho\in G}R(\rho)-a_n.
\]
We first show that $R(\rho)\le k a_{n-1}$ for every $\rho$.

If $kA=G$, this is immediate, since every residue has a representative
using at most $k$ elements of $A$, which is at most $k a_{n-1}$. Suppose instead that $kA$ is
proper, and let $H=\operatorname{Sym}_G(kA)$. The subgroup $H$ is nontrivial. If it
were trivial, the iterated Kneser bound would give
$|kA|\ge k(n-1)+1>a_n$. Also, since $0\in kA$ and $kA$ is
$H$-periodic, we have $H\subseteq kA$.

Write $h=|H|$ and $d=a_n/h$. Thus $H$ consists of the multiples of
$d$ in $G$. Let $B$ be the image of $A$ in $G/H\cong\mathbb Z_d$.
The sumset $kB$ is proper and has trivial stabilizer; otherwise a
nonzero period of $kB$ would lift to a period of $kA$ outside $H$.
Theorem~\ref{coro:kneser2} now gives
$|kB|\ge k(|B|-1)+1$. Set
\[
    c=d-k(|B|-1).
\]
Since $kB$ is proper, we have $c\ge2$, and $|kB|\ge d-c+1$. Moreover,
$B$ generates $\mathbb Z_d$. If there is a common divisor of $d$ and all the elements of $B$, this common divisor would divide $a_1,\ldots,a_n$. Because $0\in B$,
adding $B$ strictly enlarges any proper sumset of $B$. Indeed, if
$S+B=S$, every element of $B$ is a period of $S$, so $S=\mathbb Z_d$.
Consequently, $|jB|\ge |(j-1)B|+1$ whenever $(j-1)B$ is proper.
Starting with $kB$, at most $c-1$ further additions therefore cover
the quotient,
\[
    (k+c-1)B=\mathbb Z_d.
\]

We next estimate the cost of lifting a sum from this quotient back
to $G$. The full preimage of $B$ is $A+H$, a union of $|B|$ cosets
with $h|B|$ elements. Since $A$ contains $n$ of them, the number
missing from these cosets is
$\Delta=|(A+H)\setminus A|=|A+H|-|A|=h|B|-n$.
Put $\alpha=(\Delta+1)/h$. Since
$k(n-1)\ge a_n=hd$ and
$n-1=h(|B|-1)+h-\Delta-1$, we obtain
\[
    k(h-\Delta-1)\ge hc.
\]
Therefore,
\[
    \Delta<h-1\qquad\text{and}\qquad
    \alpha=\frac{\Delta+1}{h}\le1-\frac ck.
\]

For $b\in B$, let $A_b$ be the elements of $A$ congruent to $b$
modulo $d$. Thus $\delta_b=h-|A_b|$ counts the missing elements
in the coset $b+H$, and $\sum_{b\in B}\delta_b=\Delta$. For
$b\ne0$, let $\mu_b=\min A_b$ be the smallest generator in that
coset. At most $\delta_b$ positions precede $\mu_b$
in its coset, so $\mu_b\le(\delta_b+1)d$. Among the
$h-\delta_b$ elements of $A_b$, the largest is at least
$\mu_b+(h-\delta_b-1)d$ and at most $a_{n-1}$. Combining these
two observations yields
\[
    \mu_b\le\frac{\delta_b+1}{h}a_{n-1}
          \le\alpha a_{n-1}\qquad(b\in B\setminus\{0\}).
\]
The gap bound also lets two elements of $A$ fill any nonzero coset
met by $A$. For $b\ne0$ we have
$|A_b|+|A_0|=2h-\delta_b-\delta_0\ge2h-\Delta>h$.
For any $x\in b+H$, the subsets $x-A_b$ and $A_0$ of $H$ must
therefore intersect. Hence
\[
    A_b+A_0=b+H.
\]

Now fix $\rho\in G$. If $\rho\in H$, the inclusion $H\subseteq kA$
already gives $R(\rho)\le k a_{n-1}$. Otherwise, use
$(k+c-1)B=\mathbb Z_d$ to express the image of $\rho$ as a sum
$b_1+\cdots+b_r$, where every $b_i$ is nonzero and
$1\le r\le k+c-1$. Here we discard any zero summands. Choose the
smallest generator $\mu_{b_i}$ in each coset $b_i+H$ for $i\ge2$,
and let $t=\sum_{i=2}^r\mu_{b_i}$ (with $t=0$ if $r=1$).
The quotient representation says that $\rho-t$ belongs to
$b_1+H$. By $A_{b_1}+A_0=b_1+H$, there are $x\in A_{b_1}$ and
$y\in A_0$ with $t+x+y\equiv\rho\pmod{a_n}$. This is a
nonnegative sum of the original generators. Each $\mu_{b_i}$ costs
at most $\alpha a_{n-1}$, while $x$ and $y$ each cost at most
$a_{n-1}$. Hence its total value is at most
$(2+\alpha(r-1))a_{n-1}$. By the bounds on $r$ and $\alpha$,
\[
    2+\alpha(r-1)
    \le2+\left(1-\frac ck\right)(k+c-2)
    =k-\frac{c(c-2)}{k}\le k.
\]
Thus $R(\rho)\le k a_{n-1}$ in every residue class, and so
$F(a_1,\ldots,a_n)\le k a_{n-1}-a_n$.
\end{proof}

\begin{proof}[Proof of Theorem~\ref{thm:upper}]
It suffices to prove that
$a_{n-1}\lceil a_n/(n-1)\rceil-a_n$ is representable.
Indeed, Lemma~\ref{lem:weak-upper} then gives
\[
    F(a_1,\ldots,a_n)
    \le a_{n-1}\left\lceil\frac{a_n}{n-1}\right\rceil-a_n-1.
\]
Put $k=\lceil a_n/(n-1)\rceil$,
$A=\{0,a_1,\ldots,a_{n-1}\}$, and $D=a_{n-1}-A$. This set has
$n$ elements, contains $0$, and lies in $[0,a_{n-1}]$. We claim that
some nonzero sum of at most $k$ elements of $D$ is divisible by
$a_n$. Otherwise, for each $1\le j\le k$, the zero residue has only
the all-zero representation in $jD$. Applying
Corollary~\ref{lem:Kem} to $(j-1)D+D$ for $j=2,\ldots,k$ gives
$|kD|\ge k(n-1)+1>a_n$, a contradiction.

Choose nonzero $d_1,\ldots,d_r\in D$, with $r\le k$, whose sum is
$s a_n$ for some $s\ge1$. Then
\[
    k a_{n-1}-a_n
    =(k-r)a_{n-1}
      +\sum_{i=1}^r(a_{n-1}-d_i)
      +(s-1)a_n.
\]
Every term on the right is a nonnegative combination of the original
generators. Thus $k a_{n-1}-a_n$ is representable, as required.
\end{proof}

\subsection{Lower bound}
\begin{proof}[Proof of Theorem~\ref{thm:lower}]
For each integer $q\ge2$, take
\[
    S_{n,q}=\{iq\mid 1\le i\le n-1\}\cup\{(n-2)q+1\}.
\]
The two generators $q$ and $(n-2)q+1$ are coprime, and every other
element of $S_{n,q}$ is a multiple of $q$. Hence
$\langle S_{n,q}\rangle=\langle q,(n-2)q+1\rangle$, and Sylvester's
formula gives
\[
    F(S_{n,q})=q\bigl((n-2)q+1\bigr)-q-\bigl((n-2)q+1\bigr)
    =\bigl((n-2)q+1\bigr)q-(n-1)q-1.
\]
Here $a_{n-1}=(n-2)q+1$, $a_n=(n-1)q$, and
$\lceil a_n/(n-1)\rceil=q$. Thus $F(S_{n,q})$ equals the right-hand
side of \eqref{eq:main-bound}.
\end{proof}

\section*{Acknowledgements}
The authors first proved the weaker upper bound in
Lemma~\ref{lem:weak-upper}. GPT-6 then helped find the argument that
lowers its right-hand side by one. GPT-6 also assisted with the
proof of the sharpness result in Theorem~\ref{thm:lower} and with
editing the English prose. The authors take responsibility for the
final manuscript.

\bibliographystyle{plain}
\bibliography{main2}
\end{document}